\documentclass[preprint,12pt]{elsarticle}

\usepackage{amsmath}
\usepackage{amsfonts}
\usepackage{amssymb}
\usepackage{amsthm}
\usepackage{booktabs}
\usepackage{ragged2e}
\usepackage[utf8]{inputenc}
\usepackage[english]{babel}
\usepackage{graphicx}
 \usepackage{parskip}
\usepackage{textpos}
\usepackage[utf8]{inputenc}
\usepackage[T1]{fontenc}
\usepackage{helvet}
\usepackage{amsfonts}
\usepackage{amsmath}
\usepackage{amssymb}
\usepackage{tikz, graphicx}
\usepackage{hyperref}
\biboptions{numbers,sort&compress}
\usepackage{bigints}
\usepackage{xcolor}
\newtheorem{theorem}{Theorem}[section]

\newtheorem{definition}[theorem]{Definition}
\newtheorem{corollary}[theorem]{Corollary}

\newtheorem{lemma}[theorem]{Lemma}

\newtheorem{remark}[theorem]{Remark}

\usepackage{float}
\usepackage{graphicx} % Required for inserting images
\numberwithin{equation}{section}

\journal{J. Approx. Theory}

\begin{document}
\begin{frontmatter}

%% Title, authors and addresses

%% use the tnoteref command within \title for footnotes;
%% use the tnotetext command for theassociated footnote;
%% use the fnref command within \author or \affiliation for footnotes;
%% use the fntext command for theassociated footnote;
%% use the corref command within \author for corresponding author footnotes;
%% use the cortext command for theassociated footnote;
%% use the ead command for the email address,
%% and the form \ead[url] for the home page:
%% \title{Title\tnoteref{label1}}
%% \tnotetext[label1]{}
%% \author{Name\corref{cor1}\fnref{label2}}
%% \ead{email address}
%% \ead[url]{home page}
%% \fntext[label2]{}
%% \cortext[cor1]{}
%% \affiliation{organization={},
%%             addressline={},
%%             city={},
%%             postcode={},
%%             state={},
%%             country={}}
%% \fntext[label3]{}

\title{Stancu-Type Generalizations of Neural Network Operators with Perturbed Sampling Nodes} %% Article title

%% Authors
\author{Sachin Saini}

% \affiliation{organization={Department of Mathematics, Indian Institute of Technology Roorkee},
%             city={Roorkee},
%             postcode={247667},
%             state={Uttarakhand},
%             country={India}}

\begin{abstract}
In this paper, we introduce a Stancu-type generalization of multivariate neural network operators by incorporating two parameters that perturb the sampling nodes. The proposed operators extend the existing neural network operator by allowing greater flexibility in the placement of sampling nodes. We establish the well-definedness and boundedness of the operators and prove uniform convergence on compact domains. Furthermore, quantitative error estimates are derived in terms of the modulus of continuity, leading to convergence rate results. Numerical experiments are presented to illustrate the approximation behavior of the proposed operators and to demonstrate the effect of the Stancu parameters on the sampling nodes and the approximation accuracy. Finally, the application of signal denoising is demonstrated using a synthetic ECG signal, showing that the proposed operators effectively suppress noise while preserving the signal's main characteristics.
\end{abstract}

\begin{keyword}
Sigmoidal functions \sep Neural network operators \sep Stancu operators \sep Multivariate approximation \sep Modulus of continuity \sep Convergence analysis.
\MSC[2020] 41A25 \sep 41A30 \sep 41A36 \sep 47A58.
\end{keyword}

\end{frontmatter}

\section{Introduction}
Neural networks have become a fundamental tool in modern applied mathematics, scientific computing, and data-driven modelling due to their remarkable approximation capabilities. Classical results such as the universal approximation theorem demonstrate that feedforward neural networks with a single hidden layer are capable of approximating a wide class of continuous functions with arbitrary accuracy. Beyond the traditional training-based perspective, neural networks can also be studied within a rigorous mathematical framework through the theory of
\emph{neural network operators} (NNOs). In this setting, neural networks are interpreted as explicit approximation operators acting on function spaces, allowing the approximation process to be analyzed by means of classical tools from approximation theory. This operator-based approach provides constructive approximation schemes together with precise convergence results and quantitative error estimates.

The systematic study of NNOs was initiated by Cardaliaguet and Euvrard~\cite{cardaliaguet1992approximation}, who introduced bell-shaped and squashing-type neural constructions and established an important connection between neural networks and classical approximation theory. Subsequently, Anastassiou developed a comprehensive approximation theory for NNOs
\cite{anastassiou1997rate, anastassiou2000rate,
anastassiou2011multivariatesigmoidal,
anastassiou2013frac-normal, anastassiou2013multi-vari-rate}.
His contributions include the derivation of convergence rates for various sigmoidal activation functions and extensions of the framework to multivariate, fractional, and hyperbolic NNOs.

A particularly significant contribution in this direction was made by
Costarelli and Spigler~\cite{costarelli2013approxi.Single-layer-for-C-class},
who introduced normalized positive sampling-type neural network
operators acting on the space $C(\mathcal{I})$, where
$\mathcal{I}=[a,b]\subset\mathbb{R}$. These operators are defined by
\begin{equation}\label{classicalLNNO1}
F_n(f;s)
=
\frac{\sum\limits_{k=\lceil na\rceil}^{\lfloor nb\rfloor}
\sigma_\eta(ns-k)\,f\!\left(\tfrac{k}{n}\right)}
{\sum\limits_{k=\lceil na\rceil}^{\lfloor nb\rfloor}
\sigma_\eta(ns-k)},
\qquad s\in\mathcal{I}.
\end{equation}
Under suitable assumptions on the sigmoidal activation function, these operators preserve positivity and constants and approximate every function $f\in C(\mathcal{I})$ uniformly on $\mathcal{I}$.

Motivated by the classical Stancu generalization of positive linear operators, 
several Stancu-type extensions have been investigated in recent years 
\cite{nasiruzzaman2025approximation,torun2024some,lian2023approximation,cai2024approximation}. 
Inspired by these developments, the aim of this paper is to introduce and study 
a multivariate Stancu-type generalization of NNOs. 
The proposed operators incorporate two parameters that allow a controlled 
perturbation of the sampling nodes, thereby providing additional flexibility 
in the approximation process.

\section{Preliminaries}\label{sec:preliminaries}
Let $\mathcal{I}=[a,b]\subset\mathbb{R}$ be a compact interval.
We denote by $C(\mathcal{I})$ the Banach space of all real-valued
continuous functions defined on $\mathcal{I}$ endowed with the uniform norm
\[
\|f\|_\infty := \sup_{s\in\mathcal{I}} |f(s)|.
\]

\textbf{Sigmoidal functions and associated activations :}
A measurable function $\eta:\mathbb{R}\to\mathbb{R}$ is called
\emph{sigmoidal} if
\[
\lim_{s\to -\infty}\eta(s)=0,
\qquad
\lim_{s\to +\infty}\eta(s)=1.
\]
Throughout this paper, we assume that $\eta$ is an increasing sigmoidal function satisfying $\eta(1)<1$ and the following conditions:

\begin{enumerate}
\item[(P1)] $\eta(s)-\tfrac12$ is an odd function;
\item[(P2)] $\eta\in C^{2}(\mathbb{R})$ and $\eta$ is concave on $[0,+\infty)$;
\item[(P3)] there exists $\rho>0$ such that
\[
\eta(s)=\mathcal{O}\bigl(|s|^{-1-\rho}\bigr)
\quad \text{as } s\to -\infty,
\]
and by \textup{(P1)} this implies
\[
1-\eta(s)=\mathcal{O}\bigl(|s|^{-1-\rho}\bigr)
\quad \text{as } s\to +\infty.
\]
\end{enumerate}
Associated with $\eta$, we define the neural activation function
\[
\sigma_\eta(s)
:=
\frac12\bigl[\eta(s+1)-\eta(s-1)\bigr],
\qquad s\in\mathbb{R}.
\]
The corresponding multivariate activation is defined by 
\[
\sigma(\mathbf{s})
=
\prod_{i=1}^d \sigma_\eta(s_i),
\qquad \mathbf{s}\in\mathbb{R}^d.
\]
\begin{definition}
For $r\ge0$, the \emph{discrete absolute moment of order $r$} associated with $\sigma_\eta$ is defined by
\[
\mathcal{M}_{r}(\sigma_\eta)
:=
\sup_{s\in\mathbb{R}}
\sum_{k\in\mathbb{Z}}
|\sigma_\eta(s-k)|\,|s-k|^{r}.
\]
\end{definition}
Under assumptions \textup{(P1)-(P3)} one has
\[
\mathcal{M}_{r}(\sigma_\eta)<+\infty,
\qquad 0\le r<\rho.
\]
For related results, we refer to
\cite{P.L.Butzerbook,sharma2024some}.

\medskip

\begin{lemma}\label{lemma-psi-properties}
The activation function $\sigma_\eta$ satisfies the following properties.
\begin{enumerate}
\item[(i)] $\sigma_\eta(s)\ge0$ for all $s\in\mathbb{R}$ and
$\displaystyle\lim_{s\to\pm\infty}\sigma_\eta(s)=0$;
\item[(ii)] $\sigma_\eta$ is an even function;
\item[(iii)] For every $s\in\mathbb{R}$,
\[
\sum_{k\in\mathbb{Z}}\sigma_\eta(s-k)=1,
\qquad
\|\sigma_\eta\|_{L^1(\mathbb{R})}=1;
\]
\item[(iv)] \textbf{Localization property:}
For every $\delta>0$
\[
\lim_{n\to\infty}
\sup_{s\in\mathbb{R}}
\sum_{\substack{k\in\mathbb{Z}\\ |ns-k|>n\delta}}
\sigma_\eta(ns-k)=0.
\]
\item[(v)] For every $s\in[a,b]$ and $n\in\mathbb{N}$,
\[
1 \ge
\sum_{k=\lceil na\rceil}^{\lfloor nb\rfloor}
\sigma_\eta(ns-k)
\ge \sigma_\eta(1)>0.
\]
\item[(vi)] $\sigma_\eta(s)=\mathcal{O}(|s|^{-1-\rho})$
as $s\to\pm\infty$;
\item[(vii)] $\sum_{k\in\mathbb{Z}}\sigma_\eta(s-k)$
converges uniformly on compact subsets of $\mathbb{R}$;
\item[(viii)] $\mathcal{M}_{0}(\sigma_\eta)<\infty$;
\item[(ix)] $\sigma_\eta$ is Lipschitz continuous on $\mathbb{R}$.
\end{enumerate}
\end{lemma}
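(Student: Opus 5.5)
The plan is to derive everything from the definition $\sigma_\eta(s)=\tfrac12[\eta(s+1)-\eta(s-1)]$, using only monotonicity, (P1)–(P3) and the limits of $\eta$ at $\pm\infty$.

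\emph{Properties (i), (ii), (vi) and (ix).} Nonnegativity in (i) holds because $\eta$ is increasing, and the limits at $\pm\infty$ hold because both $\eta(s\pm1)$ tend to the same value. For (ii), (P1) gives $\eta(-t)=1-\eta(t)$, hence
\[
\sigma_\eta(-s)=\tfrac12\bigl[\eta(1-s)-\eta(-s-1)\bigr]=\tfrac12\bigl[\eta(s+1)-\eta(s-1)\bigr]=\sigma_\eta(s).
\]
For (vi), take $s\to-\infty$: there $\sigma_\eta(s)\le\tfrac12\eta(s+1)=\mathcal O(|s+1|^{-1-\rho})=\mathcal O(|s|^{-1-\rho})$ by (P3). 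The case $s\to+\infty$ then follows from evenness. For (ix), note that $\eta\in C^2$ and is concave on $[0,\infty)$, and by (P1) $\eta'$ is even. So $\eta'$ is nonincreasing on $[0,\infty)$, giving $0\le\eta'\le\eta'(0)$. Hence $\eta$, and therefore $\sigma_\eta$, is Lipschitz with constant $\eta'(0)$.

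\emph{Property (iii) and the moment statements.} For the partition of unity, telescope the partial sums:
\[
\sum_{k=-N}^{N}\sigma_\eta(s-k)=\tfrac12\bigl[\eta(s+N+1)+\eta(s+N)-\eta(s-N)-\eta(s-N-1)\bigr],
\]
which tends to $\tfrac12[1+1-0-0]=1$. For the $L^1$ norm, integrate over $[-M,M]$ and change variables to obtain
\[
\tfrac12\int_{M-1}^{M+1}\eta-\tfrac12\int_{-M-1}^{-M+1}\eta \;\longrightarrow\; \tfrac12(2-0)=1.
\]
Property (viii) is immediate, since $\mathcal M_0=\sup_s\sum_k\sigma_\eta(s-k)=1$.

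\emph{Property (vii).} This needs the tail estimate (vi). For $s$ in a compact set $K$ and $|k|$ large, we have $|s-k|\ge|k|/2$, so $\sigma_\eta(s-k)\le C|k|^{-1-\rho}$. The Weierstrass M-test then gives uniform convergence on $K$.

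\emph{Property (iv).} Combine (vi) with the elementary bound that, uniformly in $x\in\mathbb R$,
\[
\sum_{|x-k|>M}|x-k|^{-1-\rho}\le C M^{-\rho}.
\]
This bound follows by comparing with an integral, since the points $x-k$ are spaced $1$ apart. Taking $x=ns$ and $M=n\delta$, and using $\sigma_\eta(t)\le C|t|^{-1-\rho}$ for $|t|\ge1$, the sum is at most $C(n\delta)^{-\rho}\to0$.

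\emph{Property (v).} This is where some care is needed. The upper bound $\le1$ follows from (iii) and nonnegativity. For the lower bound, fix $s\in[a,b]$ and choose $k=\lfloor ns\rfloor$ or $k=\lceil ns\rceil$, arranging that $k$ lies in $[\lceil na\rceil,\lfloor nb\rfloor]$. Then $|ns-k|\le1$, so it suffices to show that $\sigma_\eta$ is nonincreasing on $[0,\infty)$; this gives $\sigma_\eta(ns-k)\ge\sigma_\eta(1)$. Monotonicity follows from $\sigma_\eta'(s)=\tfrac12[\eta'(s+1)-\eta'(s-1)]$. For $s\ge1$ this is $\le0$ because $\eta'$ is nonincreasing on $[0,\infty)$. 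For $0\le s<1$, use evenness of $\eta'$ to write $\eta'(s-1)=\eta'(1-s)$, and note $1-s\le 1+s$, so again $\sigma_\eta'(s)\le0$. Positivity of $\sigma_\eta(1)=\tfrac12[\eta(2)-\eta(0)]$ requires $\eta(2)>\tfrac12$. This holds because a concave increasing $\eta$ on $[0,\infty)$ with limit $1$ cannot be constant equal to $\tfrac12$ on $[0,2]$: concavity would then force $\eta\le\tfrac12$ for all $t\ge0$. The hypothesis $\eta(1)<1$ is not needed for this step.

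The main obstacle is the node choice in (v). It requires $\lfloor nb\rfloor-\lceil na\rceil\ge0$, i.e.\ at least one integer in $[na,nb]$. It also requires that some admissible integer lies within distance $1$ of $ns$. Both are guaranteed when $n(b-a)\ge1$, so I would state (v) for $n$ sufficiently large, as is standard.
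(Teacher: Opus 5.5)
Your proposal is correct. The paper gives no argument of its own for this lemma beyond a pointer to Costarelli--Spigler. Your derivation follows the standard route from that source: evenness of $\sigma_\eta$ and of $\eta'$ from (P1), monotonicity of $\sigma_\eta$ on $[0,\infty)$ and $\sigma_\eta(1)>0$ from concavity, telescoping for the partition of unity, and the tail bound from (P3). The one place you package things differently is (iv). The usual proof first shows $\mathcal{M}_\nu(\sigma_\eta)<\infty$ for some $0<\nu<\rho$ and then uses $\sum_{|ns-k|>n\delta}\sigma_\eta(ns-k)\le\mathcal{M}_\nu(\sigma_\eta)\,(n\delta)^{-\nu}$. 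You instead compare the tail with an integral directly. The content is the same, and your version gives the slightly better rate $(n\delta)^{-\rho}$.

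Your caveat on (v) is a genuine correction of the statement as printed, not excess caution. For $[a,b]=[0.2,0.8]$ and $n=1$ one has $\lceil na\rceil=1>0=\lfloor nb\rfloor$, so the sum is empty and the asserted lower bound $\sigma_\eta(1)>0$ fails. To my recollection, the cited work sets up its operators only for $n$ with $\lceil na\rceil\le\lfloor nb\rfloor$. That nonemptiness condition is all you need, because your second requirement is automatic. If $ns<\lceil na\rceil$, then $ns\ge na>\lceil na\rceil-1$. If $ns>\lfloor nb\rfloor$, then $ns\le nb<\lfloor nb\rfloor+1$. Otherwise $\lfloor ns\rfloor$ is admissible. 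So $n(b-a)\ge1$ is sufficient but not necessary; for example, $[a,b]=[0,\tfrac12]$ with $n=1$ has index set $\{0\}$. The same restriction must be imposed in Lemma~\ref{lemma:stancu-well-defined} and Theorem~\ref{thm:rate-convergence}, which assert their conclusions for every $n\in\mathbb{N}$. There $F_n^{(\alpha,\beta)}$ is the undefined quotient $0/0$ whenever one of the coordinate ranges $\lceil na_i\rceil,\dots,\lfloor nb_i\rfloor$ is empty.
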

Detailed proofs can be found in
\cite{costarelli2013approxi.Single-layer-for-C-class,
costarelli2014convergence}.

\section{Multivariate Stancu-type NNOs}
In this section, we introduce a Stancu-type modification of the multivariate NNOs. The parameters $\alpha$ and $\beta$ allow a controlled perturbation of the sampling nodes, providing additional flexibility in the approximation process.

Let 
\[
K=\prod_{i=1}^d [a_i,b_i] \subset \mathbb{R}^d
\]
be a compact rectangular domain. Let $0 \le \alpha \le \beta$ and let $n\in\mathbb{N}$.
For $\mathbf{s}=(s_1,\dots,s_d)\in K$ and $\mathbf{k}=(k_1,\dots,k_d)$ with
\[
k_i\in\{\lceil n a_i\rceil,\dots,\lfloor n b_i\rfloor\}, 
\qquad i=1,\dots,d,
\]
we define the \emph{multivariate perturbed nodes} by
\[
\mathbf{s}_{\mathbf{k},n}^{\alpha,\beta}
=
\left(
\frac{k_1+\alpha}{n+\beta},\dots,
\frac{k_d+\alpha}{n+\beta}
\right).
\]
Observe that these perturbed nodes lie in a small neighbourhood of the domain $K$.
Indeed, since
\[
n a_i \le k_i \le n b_i,
\]
and $0 \le \alpha \le \beta$, it follows that
\[
\frac{n a_i}{n+\beta}
\le
\frac{k_i+\alpha}{n+\beta}
\le
\frac{n b_i+\beta}{n+\beta}.
\]
Consequently,
\[
a_i-\frac{\beta a_i}{n+\beta}
\le
\frac{k_i+\alpha}{n+\beta}
\le
b_i+\frac{\beta}{n+\beta}.
\]
Therefore, the nodes $\mathbf{s}_{\mathbf{k},n}^{\alpha,\beta}$ belong to a neighbourhood of $K$ whose width is of order $O(n^{-1})$. Moreover,
\[
\mathbf{s}_{\mathbf{k},n}^{\alpha,\beta}
\longrightarrow
\left(\frac{k_1}{n},\dots,\frac{k_d}{n}\right)
\quad \text{as } n\to\infty.
\]

\begin{definition}
Let $f\in C(K)$. Then, the \emph{multivariate Stancu-type NNOs} are defined as
\[
F_{n}^{(\alpha,\beta)}(f;\mathbf{s})
=
\frac{
\sum\limits_{k_1=\lceil n a_1\rceil}^{\lfloor n b_1\rfloor}
\cdots
\sum\limits_{k_d=\lceil n a_d\rceil}^{\lfloor n b_d\rfloor}
\sigma(n\mathbf{s}-\mathbf{k})
\, f\!\left(\mathbf{s}_{\mathbf{k},n}^{\alpha,\beta}\right)
}{
\sum\limits_{k_1=\lceil n a_1\rceil}^{\lfloor n b_1\rfloor}
\cdots
\sum\limits_{k_d=\lceil n a_d\rceil}^{\lfloor n b_d\rfloor}
\sigma(n\mathbf{s}-\mathbf{k})
},
\qquad
\mathbf{s}\in K,
\]
where $\mathbf{k}=(k_1,\dots,k_d)\in\mathbb{Z}^d \text{ such that } \lceil n a_i\rceil \le k_i \le \lfloor n b_i\rfloor,
\ i=1,\dots,d$.
\end{definition}
For convenience, we introduce the index set
\[
\Lambda_n :=
\left\{
\mathbf{k}=(k_1,\dots,k_d)\in\mathbb{Z}^d :
\lceil n a_i\rceil \le k_i \le \lfloor n b_i\rfloor,
\ i=1,\dots,d
\right\}.
\]
The parameters $\alpha$ and $\beta$ are perturbed in the sampling nodes and therefore yield a  Stancu-type modification of the existing NNOs.

Throughout the paper, whenever $f\in C(K)$ appears in the operators,
we consider a bounded continuous extension of $f$ to a neighborhood of $K$,
still denoted by $f$.

\begin{remark}
For $\alpha=\beta=0$, the operator $F_{n}^{(\alpha,\beta)}$ reduces to the existing multivariate NNOs introduced by Costarelli and Spigler~\cite{costarelli2013approxi.Single-layer-for-C-class}, namely
\[
F_{n}^{(0,0)}(f;\mathbf{s}) = F_n(f;\mathbf{s}).
\]
Hence, the operators defined above constitute a genuine generalization of the existing NNOs.
\end{remark}

Before proceeding to the approximation results, we first establish that the proposed Stancu-type NNOs are well defined and bounded on $C(K)$.

\begin{lemma}[Well-definedness and boundedness]
\label{lemma:stancu-well-defined}
Let $f\in C(K)$ and let $F_{n}^{(\alpha,\beta)}$ be the multivariate Stancu-type NNOs. Then for every $\mathbf{s}\in K$ and every $n\in\mathbb{N}$ the operator $F_{n}^{(\alpha,\beta)}(f;\mathbf{s})$ is well defined and satisfies
\[
|F_{n}^{(\alpha,\beta)}(f;\mathbf{s})|
\le
\left(\frac{\mathcal{M}_0(\sigma_\eta)}{\sigma_\eta(1)}\right)^d
\|f\|_\infty .
\]
\end{lemma}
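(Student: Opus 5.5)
The plan is to exploit the product structure of $\sigma$ and reduce everything to the one–dimensional estimates in Lemma~\ref{lemma-psi-properties}.

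\emph{Well-definedness.} Since $\Lambda_n$ is a finite nonempty set (each factor $\{\lceil na_i\rceil,\dots,\lfloor nb_i\rfloor\}$ is nonempty, at least for $n$ large enough that $n(b_i-a_i)\ge 1$, and this is implicitly assumed in Lemma~\ref{lemma-psi-properties}(v)), both the numerator and the denominator are finite sums. The numerator makes sense because $f$ is understood as its bounded continuous extension to a neighbourhood of $K$, and the discussion before the definition shows that every node $\mathbf{s}_{\mathbf{k},n}^{\alpha,\beta}$ lies in an $O(n^{-1})$ neighbourhood of $K$. The denominator factorizes as
\[
\sum_{\mathbf{k}\in\Lambda_n}\sigma(n\mathbf{s}-\mathbf{k})
=\prod_{i=1}^d\ \sum_{k_i=\lceil na_i\rceil}^{\lfloor nb_i\rfloor}\sigma_\eta(ns_i-k_i),
\]
and by Lemma~\ref{lemma-psi-properties}(v), applied coordinatewise with $s_i\in[a_i,b_i]$, each factor is at least $\sigma_\eta(1)>0$. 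Hence the denominator is at least $\sigma_\eta(1)^d>0$, and the operator is well defined.

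\emph{Boundedness.} Using $\sigma_\eta\ge0$ from Lemma~\ref{lemma-psi-properties}(i), I will bound the numerator by
\[
\|f\|_\infty\sum_{\mathbf{k}\in\Lambda_n}\sigma(n\mathbf{s}-\mathbf{k})
\le \|f\|_\infty\prod_{i=1}^d\sum_{k_i\in\mathbb{Z}}\sigma_\eta(ns_i-k_i)
\le \|f\|_\infty\,\mathcal{M}_0(\sigma_\eta)^d,
\]
where the last step uses the definition of $\mathcal{M}_0$ with $r=0$, which is finite by Lemma~\ref{lemma-psi-properties}(viii). Dividing by the lower bound $\sigma_\eta(1)^d$ on the denominator gives the stated constant $\left(\mathcal{M}_0(\sigma_\eta)/\sigma_\eta(1)\right)^d$.

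\emph{The one delicate point.} Here $\|f\|_\infty$ must be read as the sup norm of the extension over the neighbourhood containing the nodes, since the nodes may leave $K$. I would address this by noting that the extension can be chosen norm-preserving (for example, by composing $f$ with the coordinatewise clamping onto the box $K$), so the bound holds with the sup norm over $K$ itself. Alternatively, one can simply interpret $\|f\|_\infty$ as the norm of the fixed extension, which is the convention adopted just before the definition.
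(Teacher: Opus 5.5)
Your proof is correct and follows essentially the same route as the paper: you factorize the denominator and bound it below by $\sigma_\eta(1)^d$ via property (v), then bound the numerator by $\|f\|_\infty\,\mathcal{M}_0(\sigma_\eta)^d$ via property (viii). Your extra remarks go beyond the paper. The paper simply asserts $|f(\mathbf{s}_{\mathbf{k},n}^{\alpha,\beta})|\le\|f\|_\infty$ because the nodes lie near $K$, whereas your norm-preserving clamping extension actually justifies this step, and you also flag that $\Lambda_n$ must be nonempty.
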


\begin{proof}
Let $\mathbf{s}\in K$. Since $\sigma_\eta \ge 0$, the multivariate activation function
\[
\sigma(\mathbf{s})=\prod_{i=1}^{d}\sigma_\eta(s_i)
\]
is non-negative. Consequently,
\[
\sigma(n\mathbf{s}-\mathbf{k})\ge0
\qquad
\text{for all } \mathbf{k}\in\Lambda_n .
\]
By property (v) of Lemma~\ref{lemma-psi-properties}, for every $s_i\in[a_i,b_i]$ we have
\[
\sum_{k_i=\lceil n a_i\rceil}^{\lfloor n b_i\rfloor}
\sigma_\eta(ns_i-k_i)
\ge \sigma_\eta(1)>0 .
\]
Therefore,
\[
\sum_{\mathbf{k}\in\Lambda_n}
\sigma(n\mathbf{s}-\mathbf{k})
=
\prod_{i=1}^{d}
\sum_{k_i=\lceil n a_i\rceil}^{\lfloor n b_i\rfloor}
\sigma_\eta(ns_i-k_i)
\ge
\sigma_\eta(1)^d>0 .
\]
Hence, the denominator of $F_{n}^{(\alpha,\beta)}(f;\mathbf{s})$ is strictly positive, and the operator is well defined.

Since the perturbed nodes $\mathbf{s}_{\mathbf{k},n}^{\alpha,\beta}$ belong to a small neighborhood of $K$. Therefore,
\[
|f(\mathbf{s}_{\mathbf{k},n}^{\alpha,\beta})|
\le
\|f\|_\infty
\qquad
\text{for all } \mathbf{k}\in\Lambda_n .
\]
Using the triangle inequality, we obtain
\[
\begin{aligned}
|F_{n}^{(\alpha,\beta)}(f;\mathbf{s})|
&=
\left|
\frac{\sum\limits_{\mathbf{k}\in\Lambda_n}
\sigma(n\mathbf{s}-\mathbf{k})
f(\mathbf{s}_{\mathbf{k},n}^{\alpha,\beta})}
{\sum\limits_{\mathbf{k}\in\Lambda_n}
\sigma(n\mathbf{s}-\mathbf{k})}
\right| \\
&\le
\frac{\sum\limits_{\mathbf{k}\in\Lambda_n}
\sigma(n\mathbf{s}-\mathbf{k})
|f(\mathbf{s}_{\mathbf{k},n}^{\alpha,\beta})|}
{\sum\limits_{\mathbf{k}\in\Lambda_n}
\sigma(n\mathbf{s}-\mathbf{k})} \\
&\le
\frac{\|f\|_\infty
\sum\limits_{\mathbf{k}\in\Lambda_n}
\sigma(n\mathbf{s}-\mathbf{k})}
{\sigma_\eta(1)^d}.
\end{aligned}
\]
Finally, using property (viii) of Lemma~\ref{lemma-psi-properties}, we have
\[
\sum_{k_i\in\mathbb{Z}}
\sigma_\eta(ns_i-k_i)
\le
\mathcal{M}_0(\sigma_\eta) \implies
\sum_{\mathbf{k}\in\Lambda_n}
\sigma(n\mathbf{s}-\mathbf{k})
\le
\mathcal{M}_0(\sigma_\eta)^d .
\]
Therefore,
\[
|F_{n}^{(\alpha,\beta)}(f;\mathbf{s})|
\le
\left(\frac{\mathcal{M}_0(\sigma_\eta)}{\sigma_\eta(1)}\right)^d
\|f\|_\infty .
\]
This completes the proof.
\end{proof}

Now, we investigate the approximation behaviour of the proposed operators. 
We begin by establishing their uniform convergence on $C(K)$ and then derive 
estimates for the rate of convergence in terms of the modulus of continuity.

First, we recall the definition of the modulus of continuity. For a function 
$f \in C(K)$, the modulus of continuity of $f$ is defined by
\[
\omega(f,\delta)
=
\sup_{\|\mathbf{u}-\mathbf{v}\|\le \delta}
|f(\mathbf{u})-f(\mathbf{v})|,
\quad \delta>0,
\]
which measures the maximum variation of $f$ over all pairs of points in $K$ 
whose distance does not exceed $\delta$.

\begin{theorem}[Uniform convergence]
\label{thm:uniform-convergence}
Let $f\in C(K)$. Then
\[
\lim_{n\to\infty}F_{n}^{(\alpha,\beta)}(f;\mathbf{s})=f(\mathbf{s})
\]
uniformly for $\mathbf{s}\in K$.
\end{theorem}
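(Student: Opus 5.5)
We would prove Theorem~\ref{thm:uniform-convergence} by the standard localization argument for normalized sampling operators, with one extra term to absorb the Stancu perturbation of the nodes. Fix $\varepsilon>0$ and let $f$ also denote the bounded continuous extension to a fixed compact neighbourhood $K'$ of $K$ containing all perturbed nodes for $n\ge1$. By the node bounds computed above, $K'=\prod_i[a_i-\beta|a_i|,\,b_i+\beta]$ works. Since $f$ is uniformly continuous on $K'$, we choose $\delta>0$ such that $|f(\mathbf{u})-f(\mathbf{v})|<\varepsilon$ whenever $\mathbf{u},\mathbf{v}\in K'$ and $\|\mathbf{u}-\mathbf{v}\|\le 2\delta$.

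Because the operator reproduces constants, we write
\[
F_{n}^{(\alpha,\beta)}(f;\mathbf{s})-f(\mathbf{s})=\frac{\sum_{\mathbf{k}\in\Lambda_n}\sigma(n\mathbf{s}-\mathbf{k})\bigl[f(\mathbf{s}_{\mathbf{k},n}^{\alpha,\beta})-f(\mathbf{s})\bigr]}{\sum_{\mathbf{k}\in\Lambda_n}\sigma(n\mathbf{s}-\mathbf{k})}.
\]
By the proof of Lemma~\ref{lemma:stancu-well-defined}, the denominator is bounded below by $\sigma_\eta(1)^d$. We then split $\Lambda_n$ into the set where $\|\mathbf{k}/n-\mathbf{s}\|_\infty\le\delta$ and its complement.

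The key estimate is on the node perturbation. For $\mathbf{k}\in\Lambda_n$ we have $|k_i|\le n M$ with $M=\max_i\max(|a_i|,|b_i|)$, hence
\[
\Bigl|\frac{k_i+\alpha}{n+\beta}-\frac{k_i}{n}\Bigr|=\frac{|\alpha n-\beta k_i|}{n(n+\beta)}\le\frac{\alpha+\beta M}{n+\beta}.
\]
This bound is uniform in $\mathbf{k}$ and $\mathbf{s}$. Choose $n_0$ such that $\sqrt d(\alpha+\beta M)/(n+\beta)<\delta$ for $n\ge n_0$.

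On the near set, $\|\mathbf{s}_{\mathbf{k},n}^{\alpha,\beta}-\mathbf{s}\|\le\sqrt d\,\delta+\delta$. Adjusting $\delta$ by a dimensional constant at the outset keeps this within the continuity radius, so each term there is $<\varepsilon$. The weights on the near set sum to at most $\mathcal{M}_0(\sigma_\eta)^d$ by property (viii), so this part contributes at most $\varepsilon\,\mathcal{M}_0(\sigma_\eta)^d/\sigma_\eta(1)^d$.

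On the far set, we bound $|f(\mathbf{s}_{\mathbf{k},n}^{\alpha,\beta})-f(\mathbf{s})|\le 2\|f\|_{\infty,K'}$. If $\|\mathbf{k}/n-\mathbf{s}\|_\infty>\delta$, then some coordinate satisfies $|ns_j-k_j|>n\delta$. Summing over the other coordinates with the bound $\mathcal{M}_0(\sigma_\eta)$ gives
\[
\sum_{\text{far}}\sigma(n\mathbf{s}-\mathbf{k})\le\sum_{j=1}^d\mathcal{M}_0(\sigma_\eta)^{d-1}\sup_{s\in\mathbb{R}}\sum_{|ns-k|>n\delta}\sigma_\eta(ns-k).
\]
By the localization property (iv), the right-hand side tends to $0$ uniformly in $\mathbf{s}$, so it is $<\varepsilon$ for $n$ large.

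Combining the two parts, $\sup_{\mathbf{s}\in K}|F_{n}^{(\alpha,\beta)}(f;\mathbf{s})-f(\mathbf{s})|\le C\varepsilon$ for all large $n$, with $C$ depending only on $d$, $\sigma_\eta$ and $\|f\|_{\infty,K'}$. This gives uniform convergence.

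The only genuinely new step compared with the classical operators is the uniform node-perturbation bound. The main care point is that continuity must be used on the neighbourhood $K'$ rather than on $K$, since the nodes may leave $K$. This is why we rely on the uniform continuity of the extension on the compact set $K'$, rather than on $\omega(f,\cdot)$ as defined on $K$.
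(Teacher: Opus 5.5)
Your proof is correct and is essentially the paper's argument: weighted-average form, denominator bounded below by $\sigma_\eta(1)^d$, a uniform $O(1/n)$ node-shift estimate, and a near/far split closed by the localization property (iv). It is in fact more careful than the paper's version, which uses uniform continuity on $K$ at nodes that may lie outside $K$ and applies (iv) to the multivariate kernel without the coordinatewise reduction you give.

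There is one slip: your explicit choice $K'=\prod_i[a_i-\beta|a_i|,\,b_i+\beta]$ can miss nodes when $b_i<0$ (for $a_i=-11$, $b_i=-10$, $\alpha=\beta=n=1$ the nodes are $-5$ and $-4.5$), an error inherited from the paper's upper node bound. It is harmless, because your own estimate together with $\mathbf{k}/n\in K$ shows every node lies within $\sqrt d(\alpha+\beta M)/(n+\beta)$ of $K$, so the closed neighbourhood of $K$ of radius $\sqrt d(\alpha+\beta M)/(1+\beta)$ can serve as $K'$ (or, for large $n$, any fixed neighbourhood on which the extension is defined).
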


\begin{proof}
Let $\mathbf{s}\in K$. From the definition of the operator we obtain
\[
\begin{aligned}
|F_{n}^{(\alpha,\beta)}(f;\mathbf{s})-f(\mathbf{s})|
&=
\left|
\frac{\sum\limits_{\mathbf{k}\in\Lambda_n}
\sigma(n\mathbf{s}-\mathbf{k})
\left(f(\mathbf{s}_{\mathbf{k},n}^{\alpha,\beta})-f(\mathbf{s})\right)}
{\sum\limits_{\mathbf{k}\in\Lambda_n}
\sigma(n\mathbf{s}-\mathbf{k})}
\right| \\
&\le
\frac{\sum\limits_{\mathbf{k}\in\Lambda_n}
\sigma(n\mathbf{s}-\mathbf{k})
|f(\mathbf{s}_{\mathbf{k},n}^{\alpha,\beta})-f(\mathbf{s})|}
{\sum\limits_{\mathbf{k}\in\Lambda_n}
\sigma(n\mathbf{s}-\mathbf{k})}.
\end{aligned}
\]
By property (v) of Lemma~\ref{lemma-psi-properties}, we have
\[
\sum_{\mathbf{k}\in\Lambda_n}\sigma(n\mathbf{s}-\mathbf{k})
\ge \sigma_\eta(1)^d,
\]
Therefore
\[
|F_{n}^{(\alpha,\beta)}(f;\mathbf{s})-f(\mathbf{s})|
\le
\frac{1}{\sigma_\eta(1)^d}
\sum_{\mathbf{k}\in\Lambda_n}
\sigma(n\mathbf{s}-\mathbf{k})
|f(\mathbf{s}_{\mathbf{k},n}^{\alpha,\beta})-f(\mathbf{s})|.
\]
Since $K$ is compact and $f$ is continuous, $f$ is uniformly continuous on $K$. 
Thus, for every $\varepsilon>0$ there exists $\delta>0$ such that
\[
\|\mathbf{u}-\mathbf{v}\|<\delta/2
\quad \Longrightarrow \quad
|f(\mathbf{u})-f(\mathbf{v})|<\varepsilon.
\]
Observe that
\[
\|\mathbf{s}_{\mathbf{k},n}^{\alpha,\beta}-\mathbf{s}\|
\le
\left\|\frac{\mathbf{k}}{n}-\mathbf{s}\right\|
+
\left\|\frac{\mathbf{k}+\alpha}{n+\beta}-\frac{\mathbf{k}}{n}\right\|.
\]
For each component,
\[
\left|
\frac{k_i+\alpha}{n+\beta}-\frac{k_i}{n}
\right|
=
\frac{|n\alpha-k_i\beta|}{n(n+\beta)}
\le
\frac{C}{n},
\]
for some constant $C>0$ independent of $k_i$ and $n$. Therefore
\[
\|\mathbf{s}_{\mathbf{k},n}^{\alpha,\beta}-\mathbf{s}\|
\le
\left\|\frac{\mathbf{k}}{n}-\mathbf{s}\right\|+\frac{C}{n}.
\]
Split the sum as
\[
\sum_{\mathbf{k}\in\Lambda_n}
=
\sum_{\|n\mathbf{s}-\mathbf{k}\|\le n\delta}
+
\sum_{\|n\mathbf{s}-\mathbf{k}\|>n\delta} =: I_1 +I_2.
\]
\textbf{Estimate for $I_1$.}
If $\|n\mathbf{s}-\mathbf{k}\|\le n\delta$, then
\[
\left\|\frac{\mathbf{k}}{n}-\mathbf{s}\right\|\le\delta,
\]
and for sufficiently large $n$ we also have $\frac{C}{n}<\delta$. 
Thus
\[
\|\mathbf{s}_{\mathbf{k},n}^{\alpha,\beta}-\mathbf{s}\|<\delta
\implies
|f(\mathbf{s}_{\mathbf{k},n}^{\alpha,\beta})-f(\mathbf{s})|<\varepsilon.
\]
\textbf{Estimate for $I_2$.}
If $\|n\mathbf{s}-\mathbf{k}\|>n\delta$, then
\[
|f(\mathbf{s}_{\mathbf{k},n}^{\alpha,\beta})-f(\mathbf{s})|
\le
2\|f\|_\infty.
\]
Therefore
\[
\begin{aligned}
|F_{n}^{(\alpha,\beta)}(f;\mathbf{s})-f(\mathbf{s})|
&\le
\frac{1}{\sigma_\eta(1)^d}
\Bigg[
\varepsilon
\sum_{\|n\mathbf{s}-\mathbf{k}\|\le n\delta}
\sigma(n\mathbf{s}-\mathbf{k})
+
2\|f\|_\infty
\sum_{\|n\mathbf{s}-\mathbf{k}\|>n\delta}
\sigma(n\mathbf{s}-\mathbf{k})
\Bigg].
\end{aligned}
\]
Using the localization property (iv) of Lemma~\ref{lemma-psi-properties}, we have
\[
\lim_{n\to\infty}
\sup_{\mathbf{s}\in K}
\sum_{\|n\mathbf{s}-\mathbf{k}\|>n\delta}
\sigma(n\mathbf{s}-\mathbf{k})=0.
\]
Thus
\[
\limsup_{n\to\infty}
\sup_{\mathbf{s}\in K}
|F_{n}^{(\alpha,\beta)}(f;\mathbf{s})-f(\mathbf{s})|
\le
\frac{\varepsilon}{\sigma_\eta(1)^d}.
\]
Since $\varepsilon>0$ is arbitrary, we obtain
\[
\lim_{n\to\infty}
\sup_{\mathbf{s}\in K}
|F_{n}^{(\alpha,\beta)}(f;\mathbf{s})-f(\mathbf{s})|=0.
\]
Hence, the convergence is uniform on $K$.
\end{proof}

\begin{theorem}[Rate of convergence]
\label{thm:rate-convergence}
Let $f\in C(K)$. Then for every $\mathbf{s}\in K$ and every $n\in\mathbb{N}$ we have
\[
|F_{n}^{(\alpha,\beta)}(f;\mathbf{s})-f(\mathbf{s})|
\le
C\,\omega\!\left(f,\frac{1}{n}\right),
\]
where $\omega(f,\cdot)$ denotes the modulus of continuity of $f$ and
$C>0$ is a constant independent of $f$, $\mathbf{s}$ and $n$.
\end{theorem}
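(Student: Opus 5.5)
The plan is the standard modulus-of-continuity argument, with the first absolute moment carrying the $1/n$ rate. As in the proof of Theorem~\ref{thm:uniform-convergence}, I will start by using the normalization and property (v) of Lemma~\ref{lemma-psi-properties} to write
\[
|F_{n}^{(\alpha,\beta)}(f;\mathbf{s})-f(\mathbf{s})|
\le
\frac{1}{\sigma_\eta(1)^d}
\sum_{\mathbf{k}\in\Lambda_n}
\sigma(n\mathbf{s}-\mathbf{k})\,
\omega\bigl(f,\|\mathbf{s}_{\mathbf{k},n}^{\alpha,\beta}-\mathbf{s}\|\bigr).
\]
Here $\omega$ is understood for the bounded continuous extension of $f$ to a fixed neighbourhood of $K$ containing all the nodes.

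Next I will use the subadditivity of the modulus, $\omega(f,\lambda\delta)\le(1+\lambda)\omega(f,\delta)$, with $\delta=1/n$, so that
\[
\omega\bigl(f,\|\mathbf{s}_{\mathbf{k},n}^{\alpha,\beta}-\mathbf{s}\|\bigr)\le
\bigl(1+n\|\mathbf{s}_{\mathbf{k},n}^{\alpha,\beta}-\mathbf{s}\|\bigr)\,\omega(f,1/n).
\]
For the node displacement I will reuse the bound from the proof of Theorem~\ref{thm:uniform-convergence}:
\[
\|\mathbf{s}_{\mathbf{k},n}^{\alpha,\beta}-\mathbf{s}\|\le \tfrac1n\|n\mathbf{s}-\mathbf{k}\|+\tfrac{C_1}{n}.
\]
The constant can be made explicit as $C_1=\sqrt d\,(\alpha+\beta\max_i\max(|a_i|,|b_i|+1))$, which follows from $|n\alpha-k_i\beta|\le n(\alpha+\beta|k_i|/n)$. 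Consequently
\[
n\|\mathbf{s}_{\mathbf{k},n}^{\alpha,\beta}-\mathbf{s}\|\le C_1+\sum_{i=1}^d|ns_i-k_i|,
\]
after bounding the Euclidean norm by the $\ell^1$ norm.

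Substituting this back leaves sums of the form $\sum_{\mathbf{k}}\sigma(n\mathbf{s}-\mathbf{k})(1+C_1+\sum_i|ns_i-k_i|)$, which I will enlarge to sums over $\mathbb{Z}^d$. Since $\sigma$ is a product, each term factorizes. The constant part is bounded by $\mathcal{M}_0(\sigma_\eta)^d$. The term containing $|ns_i-k_i|$ is bounded by $\mathcal{M}_1(\sigma_\eta)\mathcal{M}_0(\sigma_\eta)^{d-1}$. Putting these together gives the claim with
\[
C=\sigma_\eta(1)^{-d}\Bigl[(1+C_1)\mathcal{M}_0(\sigma_\eta)^d+d\,\mathcal{M}_1(\sigma_\eta)\mathcal{M}_0(\sigma_\eta)^{d-1}\Bigr],
\]
which is independent of $f$, $\mathbf{s}$ and $n$.

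The main obstacle is the finiteness of $\mathcal{M}_1(\sigma_\eta)$. The paper only guarantees $\mathcal{M}_r(\sigma_\eta)<\infty$ for $0\le r<\rho$, so the argument as written requires $\rho>1$, and I will need to add that assumption. If $\rho\le1$ is allowed, I would instead split the sum at $\|n\mathbf{s}-\mathbf{k}\|\le n^{\gamma}$ in the usual way. That route only yields a rate of the form $\omega(f,n^{\gamma-1})+\|f\|_\infty n^{-\gamma\rho}$, not the clean bound $C\omega(f,1/n)$. So I would state the theorem under the hypothesis $\rho>1$, equivalently $\mathcal{M}_1(\sigma_\eta)<\infty$.
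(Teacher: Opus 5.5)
Your core argument is correct, and at the decisive step it is sounder than the paper's. The paper splits $\omega\bigl(f,\|\mathbf{k}/n-\mathbf{s}\|+C_1/n\bigr)\le\omega(f,\|\mathbf{k}/n-\mathbf{s}\|)+\omega(f,C_1/n)$. It then asserts, citing only the localization property (iv) and $\mathcal{M}_0(\sigma_\eta)<\infty$, that $\sum_{\mathbf{k}\in\Lambda_n}\sigma(n\mathbf{s}-\mathbf{k})\,\omega(f,\|\mathbf{k}/n-\mathbf{s}\|)\le\mathcal{M}_0(\sigma_\eta)^d\,\omega(f,1/n)$. That does not follow: $\|\mathbf{k}/n-\mathbf{s}\|$ ranges up to the diameter of $K$, and (iv) is purely qualitative, so it gives no rate. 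Your route is what is actually needed to justify the paper's estimate: you apply $\omega(f,\lambda\delta)\le(1+\lambda)\omega(f,\delta)$ with $\delta=1/n$ to the whole displacement, then absorb $\sum_i|ns_i-k_i|$ into the first absolute moment coordinate by coordinate via the product structure. Your constants ($C_1$ and the final $C$) are right.

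Where your proposal falls short of the stated theorem is the last paragraph. You add the hypothesis $\rho>1$ and treat it as equivalent to $\mathcal{M}_1(\sigma_\eta)<\infty$. It is not equivalent, and no extra hypothesis is needed, because (P1)--(P2) give one more order of decay than property (vi) records. Since $\eta$ is concave on $[0,\infty)$ and $\eta(s)-\frac12$ is odd, $\eta$ is convex on $(-\infty,0]$, so $\eta'$ is nondecreasing there. For $u<0$ this gives
\[
\eta(u/2)\ \ge\ \int_u^{u/2}\eta'(t)\,dt\ \ge\ \frac{|u|}{2}\,\eta'(u),
\qquad\text{so}\qquad \eta'(u)=\mathcal{O}\bigl(|u|^{-2-\rho}\bigr)\ \text{by (P3)}.
\]
For $s\le -1$ we also have $\sigma_\eta(s)=\frac12\int_{s-1}^{s+1}\eta'(t)\,dt\le\eta'(s+1)$. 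Hence, using evenness, $\sigma_\eta(s)=\mathcal{O}(|s|^{-2-\rho})$ as $s\to\pm\infty$. It follows that $\mathcal{M}_r(\sigma_\eta)<\infty$ for all $0\le r<1+\rho$, and in particular $\mathcal{M}_1(\sigma_\eta)<\infty$ for every $\rho>0$. With this lemma inserted, your argument proves the theorem exactly as stated, and the fallback $n^{\gamma}$-splitting is unnecessary.

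Separately, you may want $\omega$ to refer to $f$ on $K$ rather than to an unspecified extension. For that, take the extension $f\circ P_K$, where $P_K$ is the coordinatewise clamp onto the box $K$. Since $P_K$ is $1$-Lipschitz and fixes $K$, the extension's modulus is at most $\omega(f,\cdot)$.
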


\begin{proof}
Let $\mathbf{s}\in K$. By the definition of the operator, we obtain
\[
\begin{aligned}
|F_{n}^{(\alpha,\beta)}(f;\mathbf{s})-f(\mathbf{s})|
&=
\left|
\frac{\sum\limits_{\mathbf{k}\in\Lambda_n}
\sigma(n\mathbf{s}-\mathbf{k})
\left(f(\mathbf{s}_{\mathbf{k},n}^{\alpha,\beta})-f(\mathbf{s})\right)}
{\sum\limits_{\mathbf{k}\in\Lambda_n}
\sigma(n\mathbf{s}-\mathbf{k})}
\right|  \\
&\le
\frac{\sum\limits_{\mathbf{k}\in\Lambda_n}
\sigma(n\mathbf{s}-\mathbf{k})
|f(\mathbf{s}_{\mathbf{k},n}^{\alpha,\beta})-f(\mathbf{s})|}
{\sum\limits_{\mathbf{k}\in\Lambda_n}
\sigma(n\mathbf{s}-\mathbf{k})}.
\end{aligned}
\]
By property (v) of Lemma~\ref{lemma-psi-properties}, we have
\[
\sum_{\mathbf{k}\in\Lambda_n}\sigma(n\mathbf{s}-\mathbf{k})
\ge \sigma_\eta(1)^d,
\]
and therefore
\[
|F_{n}^{(\alpha,\beta)}(f;\mathbf{s})-f(\mathbf{s})|
\le
\frac{1}{\sigma_\eta(1)^d}
\sum_{\mathbf{k}\in\Lambda_n}
\sigma(n\mathbf{s}-\mathbf{k})
|f(\mathbf{s}_{\mathbf{k},n}^{\alpha,\beta})-f(\mathbf{s})|.
\]
Using the modulus of continuity,
\[
|f(\mathbf{s}_{\mathbf{k},n}^{\alpha,\beta})-f(\mathbf{s})|
\le
\omega\!\left(
f,
\|\mathbf{s}_{\mathbf{k},n}^{\alpha,\beta}-\mathbf{s}\|
\right).
\]
Observe that
\[
\|\mathbf{s}_{\mathbf{k},n}^{\alpha,\beta}-\mathbf{s}\|
\le
\left\|\frac{\mathbf{k}}{n}-\mathbf{s}\right\|
+
\left\|
\frac{\mathbf{k}+\alpha}{n+\beta}-\frac{\mathbf{k}}{n}
\right\|.
\]
Since $k_i\in[\lceil na_i\rceil,\lfloor nb_i\rfloor]$, we have
\[
\left|
\frac{k_i+\alpha}{n+\beta}-\frac{k_i}{n}
\right|
=
\left|
\frac{n\alpha-k_i\beta}{n(n+\beta)}
\right|
\le
\frac{C_1}{n}
\]
for some constant $C_1>0$ independent of $k_i$ and $n$. Therefore
\[
\|\mathbf{s}_{\mathbf{k},n}^{\alpha,\beta}-\mathbf{s}\|
\le
\left\|\frac{\mathbf{k}}{n}-\mathbf{s}\right\|
+
\frac{C_1}{n}.
\]
Using the inequality
\[
\omega(f,a+b)\le \omega(f,a)+\omega(f,b),
\]
we obtain
\[
|f(\mathbf{s}_{\mathbf{k},n}^{\alpha,\beta})-f(\mathbf{s})|
\le
\omega\!\left(f,\left\|\frac{\mathbf{k}}{n}-\mathbf{s}\right\|\right)
+
\omega\!\left(f,\frac{C_1}{n}\right).
\]
Substituting this into the previous estimate yields
\[
\begin{aligned}
|F_{n}^{(\alpha,\beta)}(f;\mathbf{s})-f(\mathbf{s})|
&\le
\frac{1}{\sigma_\eta(1)^d}
\sum_{\mathbf{k}\in\Lambda_n}
\sigma(n\mathbf{s}-\mathbf{k})
\omega\!\left(f,\left\|\frac{\mathbf{k}}{n}-\mathbf{s}\right\|\right) \\
&\quad +
\frac{\omega(f,C_1/n)}{\sigma_\eta(1)^d}
\sum_{\mathbf{k}\in\Lambda_n}
\sigma(n\mathbf{s}-\mathbf{k}).
\end{aligned}
\]
Using properties (iv) and (viii) of Lemma~\ref{lemma-psi-properties}, we get
\[
|F_{n}^{(\alpha,\beta)}(f;\mathbf{s})-f(\mathbf{s})|
\le
\left(\frac{\mathcal{M}_0(\sigma_\eta)}{\sigma_\eta(1)}\right)^d\,\left[\omega\!\left(f,\frac{1}{n}\right) +\omega\!\left(f,\frac{C_1}{n}\right)\right].
\]
Finally, using the inequality
\[
\omega(f,\lambda t)\le (1+\lambda)\,\omega(f,t),
\]
we obtain
\[
|F_{n}^{(\alpha,\beta)}(f;\mathbf{s})-f(\mathbf{s})|
\le
C\,\omega\!\left(f,\frac{1}{n}\right),
\]
where $C=:\left(\frac{\mathcal{M}_0(\sigma_\eta)}{\sigma_\eta(1)}\right)^d (2+C_1)>0$ is independent of $f$, $\mathbf{s}$ and $n$.
\end{proof}

\begin{corollary}
Let $f\in \mathrm{Lip}(\gamma)$ with $0<\gamma\le1$. Then for every 
$\mathbf{s}\in K$ and $n\in\mathbb{N}$ we have
\[
|F_{n}^{(\alpha,\beta)}(f;\mathbf{s})-f(\mathbf{s})|
\le
C\, n^{-\gamma},
\]
where $C>0$ is a constant independent of $f$, $\mathbf{s}$ and $n$.
\end{corollary}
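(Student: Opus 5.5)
The plan is to deduce the corollary directly from Theorem~\ref{thm:rate-convergence}, by bounding the modulus of continuity of a H\"older function. Recall that $f\in\mathrm{Lip}(\gamma)$ means there is a constant $L_f>0$ with $|f(\mathbf{u})-f(\mathbf{v})|\le L_f\|\mathbf{u}-\mathbf{v}\|^{\gamma}$ for all $\mathbf{u},\mathbf{v}$ in the domain. Since the operator samples $f$ at the perturbed nodes $\mathbf{s}_{\mathbf{k},n}^{\alpha,\beta}$, which lie in an $O(n^{-1})$-neighbourhood of $K$, I will assume the extension of $f$ used there is also $\gamma$-H\"older. For $\mathrm{Lip}(\gamma)$ functions on a rectangle such an extension exists with the same constant up to a factor, for instance by coordinatewise clamping to $K$. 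This assumption is consistent with the convention already adopted in the paper.

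\textbf{Step 1: bound the modulus.} Taking the supremum over $\|\mathbf{u}-\mathbf{v}\|\le\delta$ in the H\"older inequality gives $\omega(f,\delta)\le L_f\,\delta^{\gamma}$ for every $\delta>0$. In particular, $\omega(f,1/n)\le L_f\,n^{-\gamma}$.

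\textbf{Step 2: invoke the rate theorem.} Theorem~\ref{thm:rate-convergence} gives
\[
|F_{n}^{(\alpha,\beta)}(f;\mathbf{s})-f(\mathbf{s})|\le C_0\,\omega(f,1/n)\le C_0L_f\,n^{-\gamma},
\]
uniformly in $\mathbf{s}\in K$ and $n\in\mathbb{N}$. Here
\[
C_0=\left(\frac{\mathcal{M}_0(\sigma_\eta)}{\sigma_\eta(1)}\right)^d(2+C_1),
\]
which depends only on $\sigma_\eta$, $d$, $K$, $\alpha$ and $\beta$. The corollary's constant is then $C:=C_0L_f$.

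\textbf{Main obstacle: the meaning of ``independent of $f$''.} The constant $C=C_0L_f$ necessarily depends on $f$ through its H\"older constant $L_f$. No bound of the form $Cn^{-\gamma}$ that is uniform over all of $\mathrm{Lip}(\gamma)$ can hold, since scaling $f$ scales the error. So I would read the statement as asserting that $C$ is independent of $\mathbf{s}$ and $n$, and that it depends on $f$ only via $L_f$; equivalently, the bound holds uniformly over the class $\mathrm{Lip}_{L}(\gamma)$ for fixed $L$. I would state this interpretation explicitly.

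The only other point to check is that Theorem~\ref{thm:rate-convergence} applies when $\omega$ is computed in the neighbourhood where the nodes live. This is covered by the H\"older extension described in the first paragraph, so Step 1 remains valid there.
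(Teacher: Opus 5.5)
Your argument is correct and is the same as the paper's: use the H\"older condition to get $\omega(f,1/n)\le L\,n^{-\gamma}$, then insert this into Theorem~\ref{thm:rate-convergence}. Your caveat about the constant is also right. The paper's own proof ends with the constant $CL$, which depends on $f$ through its H\"older constant, so ``independent of $f$'' can only mean uniform over a fixed class $\mathrm{Lip}_L(\gamma)$, as you say.
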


\begin{proof}
If $f\in \mathrm{Lip}(\gamma)$, then its modulus of continuity satisfies
\[
\omega(f,\delta)\le L\,\delta^{\gamma},
\]
for some constant $L>0$ and for all $\delta>0$. Applying 
Theorem~\ref{thm:rate-convergence} with $\delta=\frac{1}{n}$ yields
\[
|F_{n}^{(\alpha,\beta)}(f;\mathbf{s})-f(\mathbf{s})|
\le
C\,\omega\!\left(f,\frac{1}{n}\right)
\le
C\,L\,n^{-\gamma}.
\]
Renaming the constant $CL$ as $C$ completes the proof.
\end{proof}

\begin{remark}
The estimate obtained in Theorem~\ref{thm:rate-convergence} shows that the approximation error of the proposed Stancu-type NNOs
is governed by the modulus of continuity of the function $f$. Thus, the
operators preserve the classical approximation order of multivariate
NNOs.

In particular, if $f$ belongs to the Lipschitz class $\mathrm{Lip}(\gamma)$, the approximation error decays with rate $n^{-\gamma}$. This coincides with the classical rate obtained for existing multivariate NNOs.

Moreover, when $\alpha=\beta=0$, the proposed operators reduce to the
existing NNOs, and the estimate becomes exactly the
rate of convergence known in the literature.
\end{remark}

\section{Numerical Validation}\label{sec:numerical}

In this section, we present numerical experiments illustrating the approximation behaviour of the proposed Stancu-type NNOs. All computations were carried out using \textsc{Matlab}. The experiments are designed to demonstrate three main aspects of the operators: the influence of the Stancu parameters on the approximation, the convergence of the operators as $n$ increases, and the shift of the sampling nodes introduced by the Stancu modification.

Throughout the experiments we consider the compact interval $K=[0,1]$ and the sigmoidal function
\[
\eta(s)=\frac{1}{1+e^{-s}},
\]
which generates the neural activation function
\[
\sigma(s)=\frac12\left[\eta(s+1)-\eta(s-1)\right].
\]
To evaluate the approximation capability of the operator, we consider the function
\[
f(s)=|s-0.5|+\sin(6\pi s), \qquad s\in[0,1].
\]
This function combines an oscillatory component with a non-smooth point at $s=0.5$, making it a suitable benchmark for testing approximation operators.

\subsection{Influence of the Stancu Parameters}

We first investigate the influence of the parameters $\alpha$ and $\beta$ on the approximation behaviour. Figure~\ref{fig:approximation} shows the approximation of the function $f(s)$ using the Stancu-type NNOs for $n=50$ and three different parameter choices:
\[
(\alpha,\beta)=(0,0),\qquad (0.5,0.5),\qquad (1,2).
\]

The results show that all parameter choices provide accurate approximations of the function. However, the shape of the approximation varies slightly with the selected parameters. This behaviour is due to the perturbation of the sampling nodes
\[
s_{k,n}^{(\alpha,\beta)}=\frac{k+\alpha}{n+\beta},
\]
which introduces additional flexibility compared with the existing NNOs.

\begin{figure}[H]
\centering
\includegraphics[width=0.65\textwidth]{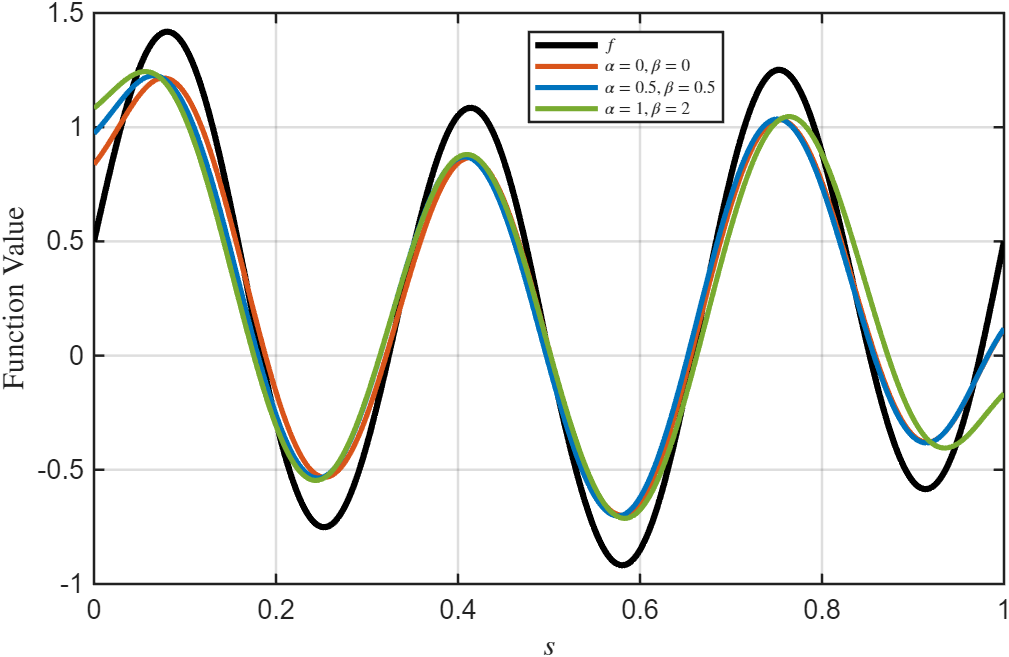}
\caption{Approximation of $f(s)$ using the Stancu-type NNOs for different values of $(\alpha,\beta)$ with $n=50$.}
\label{fig:approximation}
\end{figure}

\subsection{Convergence Behavior}
Next, we analyze the convergence of the Stancu-type NNOs. For fixed parameters $(\alpha,\beta)=(0.5,0.5)$, we compute the maximum approximation error
\[
E_n=\max_{s\in[0,1]}\left|F_n^{(\alpha,\beta)}(f;s)-f(s)\right|
\]
for $n=10,20,\ldots,1000$.

Figure~\ref{fig:convergence} illustrates the decay of the error as $n$ increases. The results clearly show that the approximation error decreases steadily, confirming the theoretical convergence results established in the previous section. The observed behaviour is consistent with the theoretical estimates involving the modulus of continuity.

\begin{figure}[H]
\centering
\includegraphics[width=0.85\textwidth]{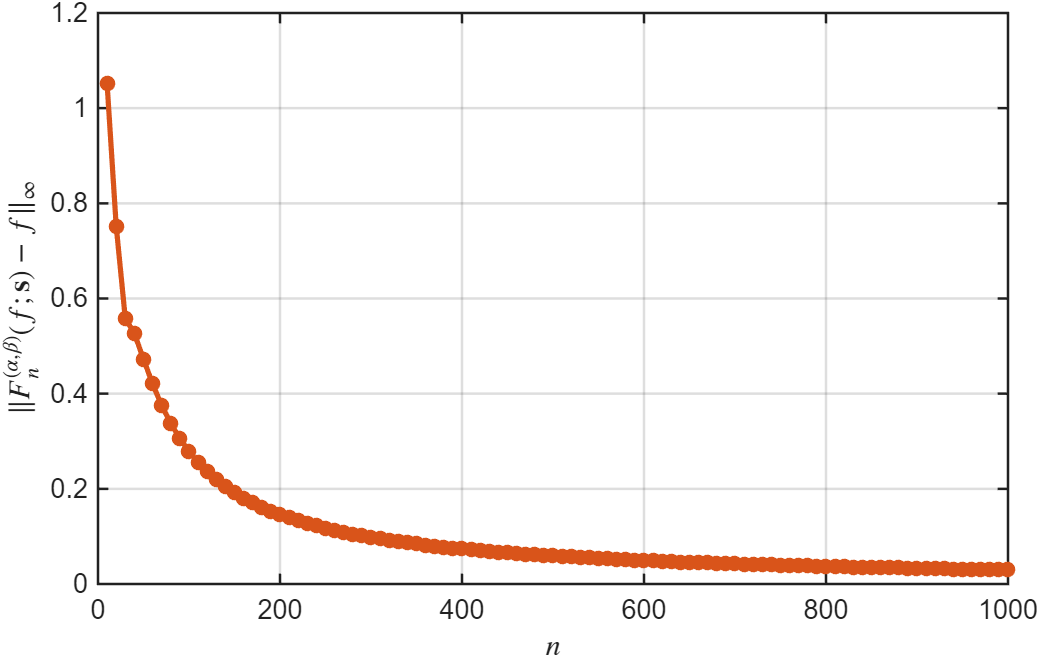}
\caption{Decay of the maximum approximation error for the Stancu-type NNOs as $n$ increases.}
\label{fig:convergence}
\end{figure}

\subsection{Shift of Sampling Nodes}

Finally, we illustrate the shift of the sampling nodes introduced by the Stancu parameters. In the existing NNOs, the sampling nodes are given by
\[
s_{k,n}=\frac{k}{n},
\]
which are uniformly distributed over the interval. In contrast, the Stancu-type NNOs employ the modified nodes
\[
s_{k,n}^{(\alpha,\beta)}=\frac{k+\alpha}{n+\beta}.
\]

Figure~\ref{fig:nodeshift} compares the existing nodes with the Stancu nodes for $(\alpha,\beta)=(0.5,0.5)$. The figure clearly shows that the Stancu modification produces a slight shift of the sampling locations, providing additional flexibility in the approximation process.

\begin{figure}[H]
\centering
\includegraphics[width=0.8\textwidth]{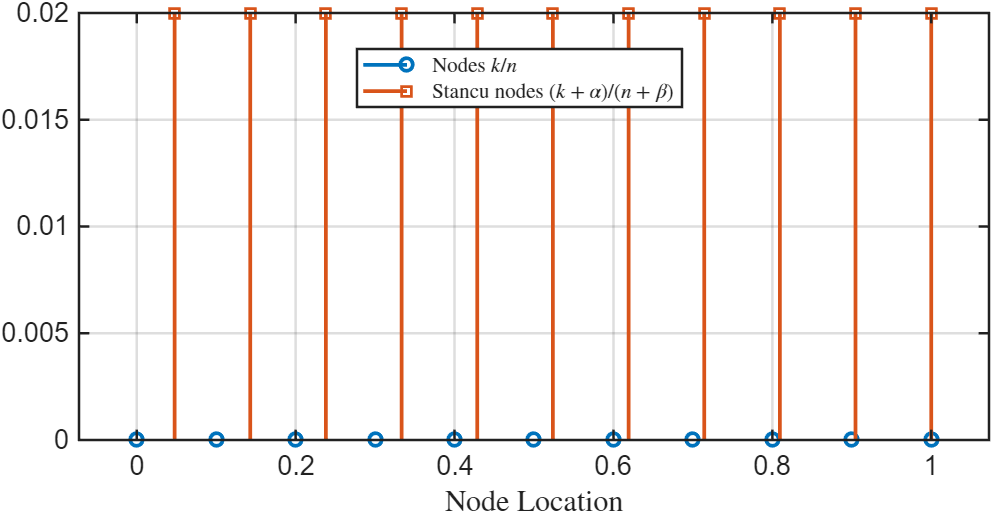}
\caption{Comparison between original sampling nodes $k/n$ and Stancu nodes $(k+\alpha)/(n+\beta)$.}
\label{fig:nodeshift}
\end{figure}

\subsection{Discussion}

The numerical experiments confirm the theoretical results obtained in the previous sections. In particular, the experiments demonstrate that the proposed Stancu-type NNOs provide accurate approximations even for functions exhibiting both oscillatory behavior and non-smooth features. Moreover, the parameters $\alpha$ and $\beta$ introduce a controllable perturbation of the sampling nodes, allowing additional flexibility in the approximation scheme.

\subsection{Application to Approximation from Perturbed Samples}

In many practical situations, the sampling points of a signal or dataset are not perfectly uniformly distributed. Instead, the observed samples may be slightly shifted due to measurement errors, sensor misalignment, or irregular sampling grids. The Stancu-type NNOs naturally accommodate such situations through the modified sampling nodes
\[
s_{k,n}^{(\alpha,\beta)}=\frac{k+\alpha}{n+\beta}.
\]
These nodes represent a controlled perturbation of the original uniform nodes $k/n$. Consequently, the proposed operators can be used for function reconstruction from perturbed or non-uniform sampling locations. This flexibility makes the Stancu modification particularly suitable for applications involving real-world sampled data.

\section{Application to ECG Signal Denoising}

In this section, we demonstrate the applicability of the proposed
Stancu-type NNOs in the problem of signal denoising. In many practical situations, biomedical signals
such as electrocardiogram (ECG) recordings are affected by measurement
noise due to sensor inaccuracies, electronic interference, or motion
artifacts. Therefore, it is important to reconstruct a smooth
approximation of the underlying signal from noisy observations.

Let $f:[0,1]\to\mathbb{R}$ denote the true signal. In practice,
the available samples are contaminated with noise and can be written as
\[
y_k = f(s_k) + \varepsilon_k, \qquad k=0,1,\dots,n-1,
\]
where $s_k=\frac{k}{n}$ are the sampling points and
$\varepsilon_k$ represents additive noise.

To reconstruct the signal from the noisy data, we employ the
Stancu-type NNOs defined in the previous section,
namely
\[
F_{n}^{(\alpha,\beta)}(f;s)
=
\frac{
\sum\limits_{\mathbf{k}\in\Lambda_n}
\sigma(ns-k)\,
f\!\left(\frac{k+\alpha}{n+\beta}\right)
}{
\sum\limits_{\mathbf{k}\in\Lambda_n}
\sigma(ns-k)
},
\qquad s\in[0,1].
\]
The parameters $\alpha$ and $\beta$ introduce a controlled perturbation of the sampling nodes and provide additional flexibility in the reconstruction process.

\medskip

For the numerical experiment, a synthetic ECG signal is generated
using Gaussian functions that model the characteristic components
of an ECG waveform, namely the $P$, $Q$, $R$, $S$, and $T$ waves.
Specifically, the signal is given by
\begin{equation}\nonumber
\begin{aligned}
f(s) ={}& 1.2\,e^{-((s-0.25)/0.03)^2}
        -2.5\,e^{-((s-0.30)/0.01)^2} \\
       & +4.0\,e^{-((s-0.32)/0.008)^2}
        -1.8\,e^{-((s-0.35)/0.015)^2} \\
       & +1.5\,e^{-((s-0.60)/0.05)^2}.
\end{aligned}
\end{equation}
Gaussian noise is added to the signal in order to simulate
realistic measurement conditions. The Stancu-type NNOs is then applied to the noisy data in order to obtain
a denoised reconstruction of the signal.

Figure~\ref{fig:ecg_denoising} shows the true ECG signal, the noisy observations, and the reconstructed signal obtained by the proposed operator. It can be observed that the Stancu-type NNOs effectively suppresses noise
while preserving the main morphological features of the ECG
waveform, including the $P$ wave, the $QRS$ complex, and the
$T$ wave.

\begin{figure}[H]
\centering
\includegraphics[width=0.85\textwidth]{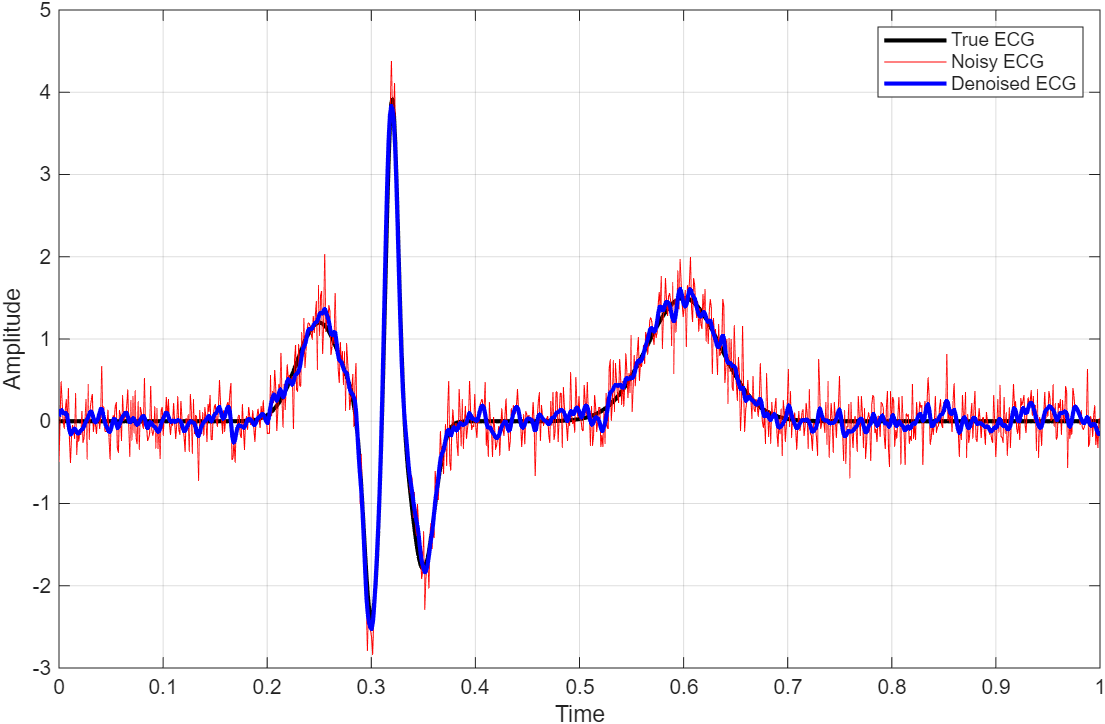}
\caption{ECG signal denoising using the Stancu-type NNOs.
The black curve represents the true ECG signal, the red curve corresponds
to the noisy signal, and the blue curve shows the reconstructed
(denoised) signal obtained by the proposed operator.}
\label{fig:ecg_denoising}
\end{figure}

The results illustrate that the proposed Stancu-typr NNOs provide an effective tool for signal denoising, thanks
to the localized averaging property of the activation function
and the flexibility introduced by the Stancu parameters
$\alpha$ and $\beta$.

To quantitatively evaluate the performance of the proposed
Stancu-type NNOs in the denoising task, we compute the root mean square error (RMSE) between the reconstructed signal
and the true ECG signal. The RMSE is defined as

\begin{equation}\nonumber
\mathrm{RMSE}
=
\sqrt{
\frac{1}{n}
\sum_{i=1}^{n}
\left(
F_{n}^{(\alpha,\beta)}(f;s_i) - f(s_i)
\right)^2
},
\end{equation}
where $f(s_i)$ denotes the true ECG signal evaluated at the
reconstruction points $s_i$, and
$F_{n}^{(\alpha,\beta)}(f;s_i)$ represents the signal reconstructed
by the Stancu-type NNOs.

Table~\ref{tab:rmse_n} reports the RMSE values obtained for different
sample sizes $n$ with fixed parameters $\alpha=0.5$, $\beta=1$, and
noise level $0.15$. It can be observed that the reconstruction error
generally decreases as the number of samples increases, confirming the
convergence behaviour of the proposed Stancu-type NNOs.
\begin{table}[H]
\centering
\begin{tabular}{|c|c|}
\hline
$n$ & RMSE \\
\hline
100  & 0.37633 \\
200  & 0.25218 \\
400  & 0.13451 \\
600  & 0.07969 \\
800  & 0.06650 \\
1000 & 0.062801 \\
\hline
\end{tabular}
\caption{RMSE values for different sample sizes $n$ obtained using the proposed Stancu-type NNOs with parameters $\alpha=0.5$, $\beta=1$, and noise level $0.15$.}
\label{tab:rmse_n}
\end{table}

\section{Conclusion}

In this paper, we introduced a Stancu-type generalization of multivariate NNOs by incorporating parameters that perturb the sampling nodes. The proposed operators provide additional flexibility in the approximation process while preserving the essential structure of NNOs.

We established the well-definedness and boundedness of the operators and proved their uniform convergence on compact domains. Furthermore, quantitative error estimates were derived in terms of the modulus of continuity, leading to convergence rate results.

Numerical experiments were carried out to validate the theoretical findings and to illustrate the influence of the Stancu parameters on the approximation behaviour and sampling nodes. The results confirm the effectiveness and flexibility of the proposed operators. Furthermore, the numerical experiment on ECG signal denoising confirms the practical effectiveness of the proposed Stancu-type NNOs, showing stable reconstruction performance even in the presence of noise.

Future work may include extensions to Kantorovich-type variants, stochastic NNOs.

\textbf{\large Declaration of competing interest}

The author declares that he has no competing financial interests or personal relationships that could influence the reported work in this paper.

\bibliographystyle{elsarticle-num}
\bibliography{Ref}

@article{cardaliaguet1992approximation,
  title={Approximation of a function and its derivative with a neural network},
  author={Cardaliaguet, Pierre and Euvrard, Guillaume},
  journal={Neural Netw.},
  volume={5},
  number={2},
  pages={207--220},
  year={1992},
  publisher={Elsevier}
}

@article{anastassiou1997rate,
  title={Rate of convergence of some neural network operators to the unit-univariate case},
  author={Anastassiou, George A},
  journal={J. Math. Anal. Appl.},
  volume={212},
  number={1},
  pages={237--262},
  year={1997},
  publisher={Elsevier}
}

@article{anastassiou2000rate,
  title={Rate of convergence of some multivariate neural network operators to the unit},
  author={Anastassiou, George A},
  journal={Comput. Math. Appl.},
  volume={40},
  number={1},
  pages={1--19},
  year={2000},
  publisher={Elsevier}
}

@article{anastassiou2011multivariatesigmoidal,
  title={Multivariate sigmoidal neural network approximation},
  author={Anastassiou, George A},
  journal={Neural Netw.},
  volume={24},
  number={4},
  pages={378--386},
  year={2011},
  publisher={Elsevier}
}

@article{anastassiou2013multi-vari-rate,
  title={Rate of convergence of some multivariate neural network operators to the unit, revisited.},
  author={Anastassiou, George A},
  journal={J. Comput. Anal. Appl.},
  volume={15},
  number={1},
  pages={1300--1309},
  year={2013}
}

@article{anastassiou2013frac-normal,
  title={Fractional approximation by normalized bell and squashing type neural network operators},
  author={Anastassiou, George A},
  journal={New Math. Nat. Comput.	},
  volume={9},
  number={01},
  pages={43--63},
  year={2013},
  publisher={World Scientific}
}

@article{costarelli2014convergence,
  title={Convergence of a family of neural network operators of the Kantorovich type},
  author={Costarelli, Danilo and Spigler, Renato},
  journal={J. Approx. Theory},
  volume={185},
  pages={80--90},
  year={2014},
  publisher={Elsevier}
}

@article{sharma2024some,
  title={Some density results by deep Kantorovich type neural network operators},
  author={Sharma, Manju and Singh, Uaday},
  journal={J. Math. Anal. Appl.},
  volume={533},
  number={2},
  pages={128009},
  year={2024},
  publisher={Elsevier}
}

@article{costarelli2013approxi.Single-layer-for-C-class,
  title={Approximation results for neural network operators activated by sigmoidal functions},
  author={Costarelli, Danilo and Spigler, Renato},
  journal={Neural Netw.},
  volume={44},
  pages={101--106},
  year={2013},
  publisher={Elsevier}
}

@book{P.L.Butzerbook,
    title ={Fourier analysis and approximation} ,
    author ={Butzer, P. L. and Nessel, R. J.} ,
    year = {1971},
publisher ={ Birkhäuser}
}

@article{nasiruzzaman2025approximation,
  title={Approximation by Stancu-type $\alpha$-Bernstein-Schurer-Kantorovich operators},
  author={Nasiruzzaman, Md},
  journal={J. Inequal. Appl.},
  volume={2025},
  number={1},
  pages={48},
  year={2025},
  publisher={Springer}
}

@article{cai2024approximation,
  title={Approximation by a new Stancu variant of generalized ($\lambda$, $\mu$)-Bernstein operators},
  author={Cai, Qing-Bo and Aslan, Re{\c{s}}at and {\"O}zger, Faruk and Srivastava, Hari Mohan},
  journal={Alexandria Eng. J.},
  volume={107},
  pages={205--214},
  year={2024},
  publisher={Elsevier}
}

@article{torun2024some,
  title={Some Approximation Properties of the (p, q)--Stancu--Schurer--Bleimann--Butzer--Hahn Operators},
  author={Torun, G{\"u}lten},
  journal={J. Math.},
  volume={2024},
  number={1},
  pages={9083766},
  year={2024},
  publisher={Wiley Online Library}
}

@article{lian2023approximation,
  title={Approximation properties of Stancu type Sz{\'a}sz-Mirakjan operators},
  author={Lian, Bo-Yong and Cai, Qing-Bo},
  journal={AIMS Math},
  volume={8},
  number={9},
  pages={21769--21780},
  year={2023}
}
\end{document}